\newtheorem{theorem}{Theorem}[section]
\newtheorem{lemma}[theorem]{Lemma}
\newtheorem{prop}[theorem]{Proposition}
\newtheorem{remark}[theorem]{Remark}
\def \R{\mathbb{R}}
\def \Z{\mathbb{Z}}
\def \Rn{\mathbb{R}^n}
\def \e{\varepsilon}
\def \grad{\nabla}
\def \half{{1}/{2}}
\date{\today}
\numberwithin{equation}{section}
\begin{document}

\title[Radiative Transport UMBLT]{Ultrasound modulated bioluminescence tomography
and controllability of the radiative transport equation}
 
\author[Bal]{Guillaume Bal}
\address{Department of Applied Physics and Applied Mathematics, Columbia University, New York, NY, USA}
\email{gb2030@columbia.edu}

\author[Chung]{Francis J. Chung}
\address{Department of Mathematics, University of Michigan, Ann Arbor, MI, USA}
\email{fjchung@umich.edu}

\author[Schotland]{John C. Schotland}
\address{Department of Mathematics and Department of Physics, University of Michigan, Ann Arbor, MI, USA}
\email{schotland@umich.edu}

\subjclass[2000]{Primary 35R30}

\keywords{radiative transport equation, inverse problem, hybrid inverse problem, control theory, ultrasound modulated bioluminescence tomography}

\begin{abstract}
We propose a method to reconstruct the density of an optical source in a highly scattering medium from ultrasound-modulated optical measurements. Our approach is based on the solution to a hybrid inverse source problem for the radiative transport equation (RTE). A controllability result for the RTE plays an essential role in the analysis.
\end{abstract}

\maketitle


\section{Introduction}

This paper is concerned with the problem of reconstructing an optical source in a highly-scattering medium. The primary application is to bioluminescence imaging, in which the cells of a model organism are tagged with a light-emitting molecular probe~\cite{contag,vasilis}. The goal is to recover the spatial distribution of the labelled cells from measurements of the emitted light. This is a classical inverse source problem for the radiative transport equation (RTE), which may be formulated as follows.

Let $X$ be a bounded domain in ${\mathbb R}^n$ with smooth boundary $\partial X$, for dimension $n\ge 2$. The specific intensity $u$ obeys the RTE 
\begin{equation}
\label{SourceRTE} 
\theta \cdot \grad u + \sigma(x) u - \int_{S^{n-1}} k(x,\theta,\theta ')u(x,\theta ')\, d \theta' = S(x) \ .
\end{equation}

Here $u(x,\theta)$ is the intensity of light at the point $x\in X$ 
traveling in the direction $\theta\in S^{n-1}$, $S$ is the source (which is taken to be isotropic) and $\sigma$ is the attenuation coefficient, which is taken to be nonnegative. The scattering kernel $k$ is nonnegative and obeys the reciprocity relation $k(x,\theta,\theta')=k(x,-\theta',-\theta)$ and the normalization condition
$\int k(x,\theta,\theta')d\theta' = 1$ for all $\theta\in S^{n-1}$. {We will also assume} that $k$ is invariant under rotations. That is, $k(x,\theta,\theta')=
k(x,\theta\cdot\theta')$, which holds for statistically homogeneous random media.
We define the subsets $\Gamma_{\pm}$ of $\partial X \times S^{n-1}$ by
\begin{equation}
\Gamma_{\pm} = \{(x,\theta) \in \partial X \times S^{n-1} : \pm \theta \cdot n(x)  > 0\} \ ,
\end{equation}
where $n(x)$ is the outward unit normal vector at $x$. The specific intensity obeys the boundary condition $u(x,\theta)=0$ for $(x,\theta)\in \Gamma_-$. 
Thus no light enters the domain except from the source. The inverse source problem is to reconstruct $S$ from boundary measurements of the outgoing specific intensity $u|_{\Gamma_+}$, assuming that $\sigma$ and $k$ are known. This problem has been considered in~\cite{BalTamasan}, where it is shown that it is possible to uniquely recover $S$, provided that $k$ is sufficiently small in a suitable norm. See also~\cite{larsen,siewert} for related work. In~\cite{StefanovUhlmann}, it was shown that the smallness condition can be {replaced by the assumption that $\sigma$ and $k$ belong to $C^2(X)$}. A corresponding stability estimate was also derived. {However, the case of continuous coefficients remains open.

Ultrasound modulated bioluminescence tomography (UMBLT) is a recently proposed imaging modality in which an acoustic wave is used to spatially modulate the optical source, while boundary measurements of the optical field are performed~\cite{BalSchotland_PhysRev2014,huynh_2013}. In this manner, the above inverse source problem is converted into a so-called hybrid inverse problem, where an external field is used to control the material properties of a medium of interest, which is then probed by a second field. See~\cite{ammari_2008,Bal_IP2009,BalSchotland_2010,bal_2012_a,bal_2010,bal_2012_b,bal_2012_c,bal_2013,capdeboscq_2009,gebauer_2009,kuchment,kuchment_2012,monard_2011,mclaughlin_2004,mclaughlin_2010,nachman} for examples of hybrid inverse problems in other physical settings.

The inverse problem of UMBLT was first studied in the special case of the diffusion approximation (DA) to the RTE~\cite{BalSchotland_PhysRev2014}. It was shown that it is possible to uniquely reconstruct the source with Lipschitz stability. However, the DA breaks down in strongly-absorbing or weakly-scattering media, near boundaries and on small length scales. In this paper, we consider the inverse problem of UMBLT in the transport regime. Once again, we find that the source can be recovered with Lipschitz stability by a constructive procedure.

The forward problem of UMBLT is formulated as follows. Suppose that a standing acoustic pressure wave with modulation amplitude of the form $\cos(q \cdot x + \varphi)$ is incident on a highly-scattering medium, where $q$ is the wave vector
and $\varphi$ is the phase of the wave. Following~\cite{BalSchotland_2010,BalSchotland_PhysRev2014}, we find that the the coefficients $\sigma$ and $k$ and the source $S$ are modulated thus forming new coefficients $\sigma_{\e}$, $k_{\e}$ and $S_{\e}$, which are given by
\begin{eqnarray}
\sigma_{\e}(x) &=& (1 + \e \cos(q\cdot x + \varphi)) \sigma(x) \ , \\
k_{\e}(x) &=& (1 + \e \cos(q\cdot x + \varphi)) k(x) \ , \\
S_{\e}(x) &=& (1 + \e \cos(q\cdot x + \varphi)) S(x) \ , 
\end{eqnarray}
where {$0 < \e\ll 1$} is the dimensionless amplitude of the acoustic wave.  The RTE thus becomes
\begin{equation}
\label{ModulatedxourceRTE}
\theta \cdot \grad u_\epsilon + \sigma_{\e}(x) u - \int_{S^{n-1}} k_{\e}(x,\theta,\theta ')u_{\e}(x,\theta ') \, d \theta ' = S_{\e}(x) \ ,
\end{equation}
where the dependence of the specific intensity $u$ on $\epsilon$ has been made explicit. The inverse problem now consists of deducing $S$ from boundary measurements of $u_{\e}$ when $q$ and $\varphi$ are varied. 

Throughout this paper we will make the following assumptions on the domain $X$ and the coefficients $\sigma$ and $k$ that appear in the RTE.  First, we will assume that $X$ is a bounded subset of $\Rn$ with diameter $\tau$.  Next, we will assume that $\sigma$, $k$, and $S$ are continuous. Finally, in order to ensure solvability of the RTE, let  
\begin{equation}
\rho = \left\| \int_{S^{n-1}}  k(x,\theta, \theta') d\theta'  \right\|_{L^{\infty}(X \times S^{n-1})},
\end{equation}
and assume that one of the following inequalities holds:
\begin{equation}\label{AbsorptionCondition}
\sigma - \rho \geq \alpha,
\end{equation}
for some positive constant $\alpha$, or
\begin{equation}\label{SmallnessCondition}
\tau \rho < 1.
\end{equation}    

Under the above conditions, the equation \eqref{ModulatedxourceRTE} with boundary condition $u_{\e}|_{\Gamma_-} = 0$ has a unique continuous solution $u_{\e}$ (see Section 2 for details), and we can define $\Lambda^{\e}_{S}(q, \varphi): \R^n \times\{0,\frac{\pi}{2}\} \rightarrow C(\Gamma_{+})$ to be the map given by
\begin{equation}
\Lambda^{\e}_{S}(q, \varphi) = \frac{1}{\e}(u_{\e}-u_0)|_{\Gamma_{+}} \ ,
\end{equation}
if $\e \neq 0$. (Recall that $u_{\e}$ is defined in part by the choice of $q$ and $\varphi$.)  We will also need to make separate use of the measurements when $\e = 0$, that is without ultrasound modulation.  Therefore we will define
\begin{equation}
\Lambda^{0}_{S} = u_0|_{\Gamma_{+}}.
\end{equation}

We have the following theorem. 

\begin{theorem}
\label{IPTheorem}
Suppose $X$, $\sigma$ and $S$ are as above and that $k$ is invariant under rotations. Then the map $S \mapsto \{\Lambda^{\e}_S: \e \geq 0\}$ is injective.  Moreover, we have the stability estimate
\begin{equation}
C\|S_1 - S_2\|_{L^{\infty}(X)} \leq \|\Lambda^0_{S_1} - \Lambda^0_{S_2}\|_{C(\Gamma_{+})}+ \|\Lambda^{\e}_{S_1} - \Lambda^{\e}_{S_2}\|_{L^1(\R^n \times\{0,\frac{\pi}{2}\},C(\Gamma_{+}))} + O(\e)
\end{equation}
where $C$ depends only on $X$, $k$, and $\sigma$.  
\end{theorem}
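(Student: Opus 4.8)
\medskip
\noindent\emph{Strategy and Step 1 (linearization).} Since $\sigma$ and $k$ are fixed, the solution maps $S\mapsto u_0$ and $S\mapsto u_\e$ are linear, hence so are $S\mapsto\Lambda^0_S$ and $S\mapsto\Lambda^\e_S$; it therefore suffices to prove the displayed estimate with $S:=S_1-S_2$ and $u_0,u_\e,\dots$ denoting the corresponding differences, and injectivity then follows by letting the right-hand side vanish. Inserting $u_\e=u_0+\e v+O(\e^2)$ into \eqref{ModulatedxourceRTE} and collecting the $O(\e)$ terms shows that $v=v_{q,\ph}$ solves
\[
\theta\cdot\grad v+\sigma v-\int_{S^{n-1}}k\,v\,d\theta'=\cos(q\cdot x+\ph)\Bigl(S-\sigma u_0+\int_{S^{n-1}}k\,u_0\,d\theta'\Bigr)=\cos(q\cdot x+\ph)\,\theta\cdot\grad u_0 ,
\]
with $v|_{\Gamma_-}=0$, the last equality being \eqref{SourceRTE}; and $\Lambda^\e_S(q,\ph)=v_{q,\ph}|_{\Gamma_+}+O(\e)$, with the $O(\e)$ remainder controlled by the forward estimates of Section~2. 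Since $v_{q,\ph}$ depends linearly on the modulating factor, linear combinations over $q\in\R^n$ and $\ph\in\{0,\frac{\pi}{2}\}$ produce, for any $M$ with $\widehat M\in L^1(\R^n)$, the solution $v_M$ of the same transport equation with source $M(x)\,\theta\cdot\grad u_0$; its trace on $\Gamma_+$ is extracted from the data with $C(\Gamma_+)$-norm bounded by $\|\widehat M\|_{L^\infty}$ times $\|\Lambda^\e_S\|_{L^1(\R^n\times\{0,\frac{\pi}{2}\},C(\Gamma_+))}$ plus an $O(\e)$ error. In particular $M$ may be taken to be a mollifier concentrated at an arbitrary $x_0\in\overline X$, with $\|\widehat M\|_{L^\infty}$ bounded independently of the concentration scale.

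\medskip
\noindent\emph{Step 2 (interior functionals via adjoint duality).} Because $k$ is rotationally invariant it is symmetric, so the adjoint transport operator is $\psi\mapsto-\theta\cdot\grad\psi+\sigma\psi-\int k\,\psi\,d\theta'$, and by Section~2 the problem $-\theta\cdot\grad\psi+\sigma\psi-\int k\,\psi\,d\theta'=0$ with $\psi|_{\Gamma_+}=\psi_+$ is well posed. Pairing the equation for $v_M$ against such a $\psi$ and integrating by parts (the only surviving boundary integral is over $\Gamma_+$, since $v_M|_{\Gamma_-}=0$) gives
\[
\int_{\Gamma_+}v_M\,\psi_+\,|\theta\cdot n(x)|\,d\theta\,dS(x)=\int_X M(x)\int_{S^{n-1}}\psi(x,\theta)\,\theta\cdot\grad u_0(x,\theta)\,d\theta\,dx .
\]
Taking $M$ to concentrate at $x_0$ — legitimate because $\theta\cdot\grad u_0=S-\sigma u_0+\int k\,u_0\,d\theta'$ is continuous — recovers the interior value $\int_{S^{n-1}}\psi(x_0,\theta)\,\theta\cdot\grad u_0(x_0,\theta)\,d\theta$ from the boundary data. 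Using the two transport equations together with the symmetry and normalization of $k$, this quantity equals
\[
S(x_0)\int_{S^{n-1}}\psi(x_0,\theta)\,d\theta-\int_{S^{n-1}}u_0(x_0,\theta)\,\theta\cdot\grad\psi(x_0,\theta)\,d\theta
\]
(equivalently $\grad\cdot\!\int_{S^{n-1}}\theta\,u_0\psi\,d\theta=S\!\int_{S^{n-1}}\psi\,d\theta$). Thus the measurements determine, for every admissible $\psi_+$, the quantity $S(x_0)\,\phi_\psi(x_0)-r_\psi(x_0)$, where $\phi_\psi:=\int_{S^{n-1}}\psi\,d\theta$ and $r_\psi:=\int_{S^{n-1}}u_0\,\theta\cdot\grad\psi\,d\theta$; a further integration by parts expresses the boundary term discarded above, and the pieces of $r_\psi$, through $\psi_+$ and the unmodulated data $\Lambda^0_S$ up to contributions of the type already controlled at $x_0$.

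\medskip
\noindent\emph{Step 3 (controllability, the pointwise bound, and injectivity).} The decisive point is to choose $\psi_+$ so that $\phi_\psi$ is bounded below on $\overline X$ while $r_\psi$ is disposed of. For the lower bound, taking $\psi_+\equiv1$ and using the positivity (comparison) principle for the transport semigroup gives $\psi\ge e^{-\tau\|\sigma\|_{L^\infty}}>0$, hence $\phi_\psi\ge|S^{n-1}|\,e^{-\tau\|\sigma\|_{L^\infty}}$, with $\|\psi_+\|_{L^1(\Gamma_+,|\theta\cdot n|)}$ bounded by a constant depending only on $X$. To handle $r_\psi$ one invokes the controllability result for the RTE, which furnishes adjoint solutions — with boundary norm still controlled by $X,\sigma,k$ — for which the coupling term $r_\psi$ reduces to quantities bounded by $\|\Lambda^0_S\|_{C(\Gamma_+)}$ (and by the already-treated $\Lambda^\e_S$ contributions). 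Dividing the recovered quantity by $\phi_\psi(x_0)$ then yields, uniformly in $x_0$,
\[
|S(x_0)|\le C(X,\sigma,k)\Bigl(\|\Lambda^0_{S}\|_{C(\Gamma_+)}+\|\Lambda^\e_{S}\|_{L^1(\R^n\times\{0,\frac{\pi}{2}\},C(\Gamma_+))}\Bigr)+O(\e) ;
\]
taking the supremum over $x_0\in\overline X$ gives the stability estimate, and setting the data to zero and letting $\e\to0$ gives $S\equiv0$.

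\medskip
\noindent\emph{Main obstacle.} The substantive difficulty is Step~3: establishing the controllability result for the RTE in the required quantitative form — producing adjoint transport solutions whose angular averages stay bounded below and whose coupling term $r_\psi$ is controlled, uniformly and with only \emph{continuous} $\sigma$ and $k$, the regularity under which the bare inverse source problem is not known to be solvable. That is where the genuinely new analysis lies; Steps~1 and~2 amount to a linearization computation together with a Fourier/mollification argument converting the $q$-family of boundary measurements into pointwise interior values, and the remaining estimates are consequences of the forward and adjoint well-posedness theory of Section~2.
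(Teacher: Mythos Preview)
Your Steps~1 and~2 are essentially the paper's derivation, reorganized: linearize in $\e$, pair with an adjoint solution, and use Fourier synthesis over $(q,\varphi)$ to recover, at each interior point $x_0$, the functional
\[
H_\psi(x_0)=\int_{S^{n-1}}\psi(x_0,\theta)\,\theta\cdot\grad u_0(x_0,\theta)\,d\theta .
\]
The paper pairs first and Fourier--inverts second; you Fourier--synthesize $v_M$ first and pair second. The outcome is the same.

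The genuine gap is Step~3. Your identity $H_\psi=S\,\phi_\psi-r_\psi$ is correct, but you never explain how $r_\psi(x_0)=\int_{S^{n-1}}u_0(x_0,\theta)\,\theta\cdot\grad\psi(x_0,\theta)\,d\theta$ is controlled by the boundary data. This term contains the \emph{unknown} interior value $u_0(x_0,\cdot)$. Controllability lets you prescribe $\psi(x_0,\cdot)=h$, and therefore also $\theta\cdot\grad\psi(x_0,\cdot)=\sigma(x_0)h-Kh$; but $u_0(x_0,\cdot)$ remains. Forcing $\theta\cdot\grad\psi(x_0,\cdot)\equiv 0$ would require $h$ to be a $\sigma(x_0)$--eigenfunction of the scattering operator, which in general does not exist. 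Alternatively, varying $h$ in your identity and trying to solve for $u_0(x_0,\cdot)$ simply unwinds back to knowledge of $\theta\cdot\grad u_0(x_0,\cdot)$, i.e.\ to $H_\psi$ itself; the rewriting buys nothing. The phrase ``a further integration by parts expresses \dots the pieces of $r_\psi$ through $\psi_+$ and $\Lambda^0_S$'' is not substantiated: any integration by parts in $x$ produces interior terms that still involve $u_0$ (and, if $M$ is a mollifier, derivatives of $M$ that blow up).

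What the paper actually does at this stage is different and is the missing idea in your sketch. It uses the controllability theorem in its $L^1$ form to choose $\psi(x_0,\cdot)$ to be an approximation to the identity at a fixed direction $\theta_0$, with $\|\psi(x_0,\cdot)\|_{L^1(S^{n-1})}$ (and hence $\|\psi\|_{L^1(S^{n-1},C(X))}$) bounded by a constant depending only on $X,\sigma,k$. This extracts $\theta_0\cdot\grad u_0(x_0,\theta_0)$ from $H_\psi(x_0)$ with a uniform bound. One then integrates along the ray from $x_0$ in direction $\theta_0$ to the boundary, using the known outgoing trace $u_0|_{\Gamma_+}=\Lambda^0_S$, to recover $u_0(x_0,\theta_0)$; finally $S$ is read off from \eqref{SourceRTE}. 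The $\|\Lambda^0_S\|_{C(\Gamma_+)}$ term in the stability estimate enters precisely at this ray--integration step, not via any manipulation of $r_\psi$.
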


The proof of Theorem \ref{IPTheorem} is constructive. That is, we will also give an algorithm by which $S$ can be reconstructed from knowledge of $\Lambda^{\e}_S$.

\begin{remark}
  Note that as compared to the result of~\cite{StefanovUhlmann}, the above stability estimate does not depend on derivatives of {the coefficients $\sigma$ and $k$}.
\end{remark}

A key ingredient in the proof of Theorem \ref{IPTheorem} is the following controllability result for the RTE, which is of interest in its own right.  

\begin{theorem}\label{ControlTheorem}
Suppose $X$, $\sigma$, and $k$ are as given above.  Then for any point $x_0 \in X$ and any continuous function $h$ on $L^p(S^{n-1})$, there is a function $g \in L^p(S^{n-1},L^{\infty}(\partial X))$ such that the boundary value problem
\begin{eqnarray*}
\theta \cdot \grad v + \sigma v &=& \int_{S^{n-1}} k(x,\theta,\theta ')v(x,\theta ')\, d \theta' \\
                v|_{\Gamma_{-}} &=& g|_{\Gamma_{-}} \\
\end{eqnarray*}
has a unique solution $v\in L^p(S^{n-1},L^{\infty}(X))$ which is continuous in a neighbourhood of $x_0$, and satisfies the property that $v(x_0, \theta) = h(\theta)$, for all $\theta \in S^{n-1}$.  Moreover, for any $1 \leq p \leq \infty$,
\begin{equation}
\|v\|_{L^p(S^{n-1},L^{\infty}(X))}  +  \|g\|_{L^p(S^{n-1},L^{\infty}(\partial X))} \leq C\|h\|_{L^p(S^{n-1})}.
\end{equation}
where $C$ depends only on $X$, $\sigma$, and $k$.  
\end{theorem}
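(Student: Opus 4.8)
The plan is to build the control $g$ and solution $v$ perturbatively, starting from the free-streaming (ballistic) part of the transport operator and treating the scattering integral as a contraction. Fix $x_0 \in X$. For each direction $\theta \in S^{n-1}$, the backward characteristic through $x_0$ exits $X$ at a unique point $x_-(x_0,\theta) \in \partial X$ at backward travel time $t_-(x_0,\theta)$; since $X$ has diameter $\tau$ and smooth boundary, these depend continuously on $\theta$ for $\theta$ away from the grazing set, and $0 \le t_-\le \tau$. The idea is to prescribe $g$ essentially as the value that, transported ballistically along the characteristic with attenuation $\sigma$, yields exactly $h(\theta)$ at $x_0$, and then correct for scattering. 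More precisely, I would set up the fixed-point map: given a candidate $v$, define the source term $Kv(x,\theta) = \int_{S^{n-1}} k(x,\theta\cdot\theta') v(x,\theta')\,d\theta'$, solve the transport problem $\theta\cdot\nabla w + \sigma w = Kv$ with inflow data $w|_{\Gamma_-} = g|_{\Gamma_-}$ chosen so that $w(x_0,\theta) = h(\theta)$, and show the map $v \mapsto w$ is a contraction on $L^p(S^{n-1}, L^\infty(X))$.

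The key computation is that the ballistic solution operator is explicit: along the characteristic $x(s) = x_0 - s\theta$,
\begin{equation}
w(x_0,\theta) = E(x_0, x_-(x_0,\theta))\, g(x_-(x_0,\theta),\theta) + \int_0^{t_-(x_0,\theta)} E(x_0, x_0 - s\theta)\, (Kv)(x_0 - s\theta, \theta)\, ds,
\end{equation}
where $E(x,y) = \exp\left(-\int_0^{|x-y|} \sigma(x - r\widehat{(x-y)})\,dr\right)$ is the attenuation factor, which is bounded between $e^{-\tau\|\sigma\|_\infty}$ and $1$. So the requirement $w(x_0,\theta) = h(\theta)$ forces
\begin{equation}
g(x_-(x_0,\theta),\theta) = E(x_0,x_-(x_0,\theta))^{-1}\left( h(\theta) - \int_0^{t_-(x_0,\theta)} E(x_0, x_0 - s\theta)(Kv)(x_0 - s\theta,\theta)\,ds \right).
\end{equation}
This defines $g$ on $\Gamma_-$ as a function of $\theta$ (one needs that the map $\theta \mapsto x_-(x_0,\theta)$ realizes each inflow point; extend $g$ arbitrarily, e.g. by zero, off this set, which keeps the $L^\infty(\partial X)$ norm controlled). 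Feeding this $g$ back into the full transport problem on all of $X$ and using the standard a priori estimate for the RTE under hypothesis \eqref{AbsorptionCondition} or \eqref{SmallnessCondition} (Section 2), one obtains $\|w\|_{L^p(S^{n-1},L^\infty(X))} \lesssim \|g\|_{L^p(S^{n-1},L^\infty(\partial X))} + \|Kv\|_{L^p(S^{n-1},L^\infty(X))}$, and since $\|K\|$ is controlled by $\rho$ one gets a contraction for the correction-to-$v$ map, hence a unique fixed point $v$ with the desired bound $\|v\| + \|g\| \le C\|h\|_{L^p(S^{n-1})}$.

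For the continuity of $v$ near $x_0$: once $v$ is the fixed point, $Kv$ is continuous in $x$ (the rotation invariance of $k$ and continuity of $k$, $v$ make the angular average continuous), so $v$ solves a transport equation with continuous right-hand side; the ballistic representation then shows $v$ is continuous at points whose backward characteristics hit $\partial X$ transversally and where the inflow data is continuous — in particular in a neighborhood of $x_0$, provided the data $g$ was arranged to be continuous in $\theta$ near $x_0$'s characteristics, which follows from continuity of $h$ and of $s\mapsto x_0 - s\theta$.

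The main obstacle is the grazing-ray issue: the maps $\theta\mapsto x_-(x_0,\theta)$ and $\theta\mapsto t_-(x_0,\theta)$ are only continuous (not Lipschitz) and can behave badly near directions tangent to $\partial X$, and one must check that the prescribed $g$ genuinely lies in $L^p(S^{n-1},L^\infty(\partial X))$ with the claimed norm bound uniformly, and that distinct $\theta$ giving the same inflow point $x_-$ do not force inconsistent values of $g$ (they won't, since $g$ carries a $\theta$-argument). Handling the endpoint cases $p=1$ and $p=\infty$ and the measurability of $g$ as a function on $\partial X$ for each $\theta$ also requires a little care, but the bounds are uniform in $p$ because all the attenuation and $t_-$ factors are pointwise bounded by constants depending only on $X$, $\sigma$, $k$.
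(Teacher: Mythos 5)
Your overall strategy --- prescribe inflow data that ballistically hits $h(\theta)$ at $x_0$ and absorb the scattering term into a fixed-point iteration --- is in the same spirit as the paper's argument, which iteratively corrects boundary data $g_{j+1}(\theta)=g_j(\theta)-w_j(x_0,\theta)$ using the solvability result of Section 2. However, there are two genuine problems with your version.

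First, and most seriously, your contraction fails for large domains, and you have no mechanism to handle that case. Inverting the attenuation factor, $E(x_0,x_-(x_0,\theta))^{-1}=\exp\bigl(\int_0^{t_-}\sigma\bigr)$, introduces a factor as large as $e^{\tau\|\sigma\|_\infty}$ into $g$, and the resulting bound on the correction map is of the form
\begin{equation*}
\|w_1-w_2\|\;\lesssim\;\Bigl(1+e^{\tau\|\sigma\|_\infty}\Bigr)\,\tau\rho\,\|v_1-v_2\| ,
\end{equation*}
which is not $<1$ under either hypothesis \eqref{AbsorptionCondition} or \eqref{SmallnessCondition} once $\tau$ or $\sigma$ is large (note that \eqref{AbsorptionCondition} places no upper bound on $\tau$, and \eqref{SmallnessCondition} places none on $\sigma$). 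The paper faces exactly the same obstruction: its iteration is shown to contract only when $\tau a<1/2$ with $a=\rho+\|\sigma\|$, and the general case is then reduced to this one by first solving on a small ball $X_0\ni x_0$ and extending the solution outward to all of $X$ via the propagation result (Lemma \ref{PropagationLemma}), which itself rests on the trace estimate of Lemma \ref{TraceLemma} and a finite chain of concentric balls. Without this extension step your argument only proves the theorem for sufficiently small domains (or sufficiently weak $\sigma$, $k$).

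Second, your prescription of $g$ is incompatible with the function space. For each $\theta$ you define $g(\cdot,\theta)$ at the single exit point $x_-(x_0,\theta)$ and extend by zero; but an element of $L^{\infty}(\partial X)$ has no well-defined value at a single point, and your $g$ is the zero element of $L^p(S^{n-1},L^\infty(\partial X))$, so the solution it generates cannot satisfy $v(x_0,\theta)=h(\theta)$. The fix is to take the boundary data constant in $x$ for each $\theta$ (this is what the paper does: each $w_j$ has inflow data $g_j(\theta)$ independent of the spatial point), which also delivers the continuity of $v$ near $x_0$ directly from Theorem \ref{RegularityTheorem} rather than through the somewhat delicate grazing-ray discussion you sketch.
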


The proof of Theorem \ref{ControlTheorem} is also constructive. Note that other control theory results for the time-dependent RTE have had applications to inverse problems as well. See for example~\cite{Acosta1, Acosta2}.

The remainder of this paper is organized as follows. In Section 2, we prove a regularity result for the RTE which will be needed in the proofs of Theorem \ref{IPTheorem} and Theorem \ref{ControlTheorem}. We then prove Theorem \ref{IPTheorem} in Section 3, using Theorem \ref{ControlTheorem}. Finally,  we will prove Theorem \ref{ControlTheorem} in Section 4.  


\section{Regularity}

In order to prove Theorem \ref{IPTheorem} and Theorem \ref{ControlTheorem}, we need the following regularity result for the RTE.  

\begin{theorem}\label{RegularityTheorem}
Let $f_{-} \in C(\Gamma_{-})$.  Under the conditions on $X$, $\sigma$, $k$, and $S$ given in Section 1, the equation \eqref{SourceRTE} has a unique continuous solution $u$ with the boundary condition
\begin{equation}
u|_{\Gamma_{-}} = f_{-}.
\end{equation}
Moreover, if \eqref{AbsorptionCondition} holds, then 
\begin{equation}\label{AbsorptionEst}
\|u\|_{L^p(S^{n-1}, C(X))} \leq \frac{1}{\alpha}\big( (\rho + \alpha) \|f_{-}\|_{L^p(S^{n-1}, C(\partial X))} + \|S\|_{L^p(S^{n-1}, C(X))} \big).
\end{equation}
If instead \eqref{SmallnessCondition} holds, then 
\begin{equation}\label{SmallnessEst}
\|u\|_{L^p(S^{n-1}, C(X))} \leq \frac{1}{1 - \tau \rho} \big( \|f_{-}\|_{L^p(S^{n-1}, C(\partial X))} + \tau\|S\|_{L^p(S^{n-1}, C(X))} \big).
\end{equation}
In both cases, the estimate given is valid for $1 \leq p \leq \infty$.
\end{theorem}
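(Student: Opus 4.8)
The plan is to recast the boundary value problem as a second-kind integral equation on $X\times S^{n-1}$ by integrating along characteristics, and then to invert it via a Neumann series whose uniform convergence and term-by-term continuity deliver existence, uniqueness, continuity, and both estimates simultaneously. I would introduce the backward exit time $t_-(x,\theta)=\inf\{t>0:\ x-t\theta\notin X\}$ and the optical depth $E(x,\theta,t)=\int_0^t\sigma(x-s\theta)\,ds$, and let $K$ denote the scattering operator $K\phi(x,\theta)=\int_{S^{n-1}}k(x,\theta\cdot\theta')\phi(x,\theta')\,d\theta'$. Then \eqref{SourceRTE} with $u|_{\Gamma_-}=f_-$ is equivalent to the identity
\[
(I-\mathcal{J}K)\,u=u^{\mathrm{in}}+\mathcal{J}S,
\]
where $u^{\mathrm{in}}(x,\theta)=e^{-E(x,\theta,t_-(x,\theta))}f_-(x-t_-(x,\theta)\theta,\theta)$ is the ballistic contribution of the boundary data and $\mathcal{J}q(x,\theta)=\int_0^{t_-(x,\theta)}e^{-E(x,\theta,t)}q(x-t\theta,\theta)\,dt$ is the free-transport solution operator with vanishing incoming trace.

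Next I would record the mapping properties of $\mathcal{J}$ and $K$ on $L^p(S^{n-1},C(X))$, uniformly for $1\le p\le\infty$. Since $e^{-E}\le1$ and $t_-\le\tau$ one gets $\|\mathcal{J}\|\le\tau$; and if \eqref{AbsorptionCondition} holds then $\sigma\ge\rho+\alpha$, hence $E(x,\theta,t)\ge(\rho+\alpha)t$ and $\|\mathcal{J}\|\le(\rho+\alpha)^{-1}$. The same two bounds give $\|\mathcal{J}S\|\le\tau\|S\|_{L^p(S^{n-1},C(X))}$, respectively $(\rho+\alpha)^{-1}\|S\|_{L^p(S^{n-1},C(X))}$, and, using $e^{-E}\le1$ together with $x-t_-(x,\theta)\theta\in\partial X$, $\|u^{\mathrm{in}}\|_{L^p(S^{n-1},C(X))}\le\|f_-\|_{L^p(S^{n-1},C(\partial X))}$. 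For $K$ I claim $\|K\|\le\rho$ on $L^p(S^{n-1},C(X))$ for every $p$: at $p=\infty$ this is immediate from $\int_{S^{n-1}}k(x,\theta\cdot\theta')\,d\theta'\le\rho$; at $p=1$ one bounds $\|K\phi(\cdot,\theta)\|_{C(X)}$ by $\sup_{x}\int_{S^{n-1}}k(x,\theta\cdot\theta')\|\phi(\cdot,\theta')\|_{C(X)}\,d\theta'$, integrates in $\theta$, and invokes Fubini with the identity $\int_{S^{n-1}}k(x,\theta\cdot\theta')\,d\theta=\int_{S^{n-1}}k(x,\theta\cdot\theta')\,d\theta'\le\rho$ (the ``column sum'' equals the ``row sum'', by reciprocity and rotation invariance of $k$); the intermediate exponents follow by interpolation between $p=1$ and $p=\infty$.

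Combining these, $\|\mathcal{J}K\|\le q$ on $L^p(S^{n-1},C(X))$ with $q=\tau\rho<1$ under \eqref{SmallnessCondition} and $q=\rho/(\rho+\alpha)<1$ under \eqref{AbsorptionCondition}. Thus $I-\mathcal{J}K$ is boundedly invertible and $u=\sum_{j\ge0}(\mathcal{J}K)^{j}(u^{\mathrm{in}}+\mathcal{J}S)$, which proves existence; uniqueness follows because the difference $w$ of two solutions with the same $f_-$ satisfies $w=\mathcal{J}Kw$, whence $\|w\|\le q\|w\|$ and $w=0$. Feeding $\|u^{\mathrm{in}}+\mathcal{J}S\|\le\|f_-\|_{L^p(S^{n-1},C(\partial X))}+\tau\|S\|_{L^p(S^{n-1},C(X))}$, respectively $\|f_-\|_{L^p(S^{n-1},C(\partial X))}+(\rho+\alpha)^{-1}\|S\|_{L^p(S^{n-1},C(X))}$, together with $\sum_j q^j=(1-q)^{-1}$ into the series and simplifying reproduces \eqref{SmallnessEst} and \eqref{AbsorptionEst} exactly. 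Continuity of $u$ then follows by carrying the same argument out in the $p=\infty$ norm: each summand is an integral of a continuous integrand over a characteristic segment of finite length, hence continuous on $\overline{X}\times S^{n-1}$, and the series converges uniformly.

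I expect the main obstacle to be the $C(X)$-valued angular estimate for $K$ at the endpoint $p=1$, that is, the passage from $\int_{S^{n-1}}\sup_{x}|K\phi(x,\theta)|\,d\theta$ to $\rho\,\|\phi\|_{L^1(S^{n-1},C(X))}$: here one must use not only the normalization of $k$ in the variable $\theta'$ but also its symmetric counterpart in $\theta$ — which is precisely where reciprocity and rotation invariance enter — and the order of $\sup_x$ and the $\theta'$-integration has to be handled carefully. A secondary technical point is upgrading the $L^\infty$ bounds to genuine continuity of $u$: this relies on continuous dependence of the exit time $t_-(x,\theta)$ on $(x,\theta)$ up to $\partial X$ and on the corresponding behaviour of $u^{\mathrm{in}}$ near grazing rays.
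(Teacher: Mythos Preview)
Your proposal is correct and follows essentially the same route as the paper: rewrite the boundary value problem via integration along characteristics as a second-kind integral equation (the paper's $(I+T_1^{-1}A_2)u=Jf_{-}+T_1^{-1}S$ is exactly your $(I-\mathcal{J}K)u=u^{\mathrm{in}}+\mathcal{J}S$, since the paper's $A_2=-K$ and $T_1^{-1}=\mathcal{J}$), bound the norm of $\mathcal{J}K$ by $\rho/(\rho+\alpha)$ or $\tau\rho$ under \eqref{AbsorptionCondition} or \eqref{SmallnessCondition}, and invert by Neumann series, reading off continuity from the $p=\infty$ case. If anything, you are more explicit than the paper about the $p=1$ endpoint for the scattering bound (where the symmetry $\int k\,d\theta=\int k\,d\theta'$ is needed) and about the behaviour of $t_-$ and $u^{\mathrm{in}}$ near grazing directions; the paper simply asserts the operator bounds and uniform convergence.
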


The proof of Theorem \ref{RegularityTheorem} is close to that of similar results in~\cite{ChoulliStefanov}; see also~\cite{DautrayLions}.  Let
\begin{equation}
\tau_{\pm}(x,\theta) = \mathrm{min}\{t \geq 0| x \pm t\theta \in \partial X\}
\end{equation}  
and 
\begin{eqnarray}
A_1 u &=& \sigma u \ , \\
A_2 u &=& -\int_{S^{n-1}} k(x,\theta, \theta')u(x,\theta') \, d\theta '  \ ,\\
A u &=& A_1 u + A_2 u \ , \\
T_0 u &=& \theta \cdot \grad u \ , \\
T_1 u &=& T_0 + A_1. 
\end{eqnarray}
In addition, let 
\begin{equation}
B(t,x,\theta) = \exp\left(-\int_0^{t}\sigma(x - s\theta,\theta)ds\right).
\end{equation}
Note that $B$ is continuous in each variable and $|B(t,x,\theta)| < 1$.  Moreover, if \eqref{AbsorptionCondition} holds, then we have the improved estimate
\begin{equation}
|B(t,x,\theta)| \leq e^{-t(\rho+\alpha)}.
\end{equation}
Finally, if $f_{-} \in C(\Gamma_{-})$, we define $Jf_{-}$ to be the function on $X \times S^{n-1}$ defined by 
\begin{equation}
Jf_{-} = B(\tau_{-}(x,\theta),x,\theta) f_{-}(x - \tau_{-}(x,\theta)\theta, \theta).
\end{equation}
Note that $Jf_{-}$ is continuous, $T_1 Jf_{-} = 0$, and $Jf_{-}|_{\Gamma_{-}} = f_{-}$.  Moreover, for any fixed $\theta\in S^{n-1}$, 
\begin{equation}
\|Jf_{-}\|_{C(X)} \leq \|f_{-}\|_{C(\partial X)},
\end{equation}
and so
\begin{equation}
\|Jf_{-}\|_{L^p(S^{n-1},C(X))} \leq \|f_{-}\|_{L^p(S^{n-1},C(\partial X))}.
\end{equation}

\begin{proof}[Proof of Theorem \ref{RegularityTheorem}]

Suppose $u$ satisfies \eqref{SourceRTE} with the boundary condition $u|_{\Gamma_{-}} = f_{-}$.  Then on $X \times S^{n-1}$, we have in the above notation that 
\begin{equation}
(T_1 + A_2)u = S.
\end{equation}
Thus for any $t \in [0, \tau_{-}(x,\theta)]$, we have 
\begin{equation}\label{PreIntegralForm}
B(t,x,\theta)[(T_1 + A_2)u](x - t\theta, \theta) = B(t,x,\theta)S(x - t\theta). 
\end{equation}
Now let
\begin{equation}
T_1^{-1}f(x,\theta) = \int_0^{\tau_{-}(x,\theta)}B(t,x,\theta)f(x - t\theta, \theta)dt.
\end{equation}
Then $T_1^{-1}f$ is continuous if $f$ is. Moreover, for any fixed $\theta$,  
\begin{equation}
\|T_1^{-1}f\|_{C(X)} \leq \tau \|f\|_{C(X)},
\end{equation}
and if we assume \eqref{AbsorptionCondition}, then
\begin{eqnarray}
\|T_1^{-1}f\|_{C(X)}  &\leq& \|f\|_{C(X)} \int_0^{\tau_{-}(x,\theta)}|B(t,x,\theta)|dt  \\
                      &\leq& \|f\|_{C(X)} \int_0^{\tau_{-}(x,\theta)}e^{-t(\rho + \alpha)}dt  \\
							        &\leq& (\rho + \alpha)^{-1} \|f\|_{C(X)}. 
\end{eqnarray}
Therefore it follows that 
\begin{equation}
\|T_1^{-1}f\|_{L^p(S^{n-1},C(X))} \leq \tau \|f\|_{L^p(S^{n-1},C(X))},
\end{equation}
and if  \eqref{AbsorptionCondition} holds, then
\begin{equation}
\|T_1^{-1}f\|_{L^p(S^{n-1},C(X))} \leq  (\rho + \alpha)^{-1} \|f\|_{L^p(S^{n-1},C(X))}.
\end{equation}

If we now integrate \eqref{PreIntegralForm} in $t$ from $0$ to $\tau_{-}(x,\theta)$, then we obtain after some calculation the result 
\begin{equation}\label{IntegralRTE}
(I+T_1^{-1}A_2)u = Jf_{-} - T_1^{-1}S.
\end{equation}
Therefore, if $u$ solves \eqref{SourceRTE} with boundary condition $u|_{\Gamma_{-}} = f_{-}$, then $u$ solves \eqref{IntegralRTE}.  Moreover, if we apply $T_1$ to both sides of \eqref{IntegralRTE}, then we get \eqref{SourceRTE}, and since $\tau_{-}(x,\theta) = 0$ on $\Gamma_{-}$,  it follows that on $\Gamma_{-}$, \eqref{IntegralRTE} reads $u = f_{-}$.  Therefore $u$ solves \eqref{SourceRTE} with the boundary condition $u|_{\Gamma_{-}} = f_{-}$ if and only if $u$ solves \eqref{IntegralRTE}.  

Now if $K = T_1^{-1}A_2$, then \eqref{IntegralRTE} reads
\begin{equation}
(I+K)u = Jf_{-} - T_1^{-1}S.
\end{equation}
Moreover, if \eqref{AbsorptionCondition} holds, then
\begin{equation}
\|Ku\|_{L^p(S^{n-1},C(X))}  \leq \frac{\rho}{\rho + \alpha} \|u\|_{L^p(S^{n-1},C(X))}  < \|u\|_{L^p(S^{n-1},C(X))} ,
\end{equation}
and if \eqref{SmallnessCondition} holds, then
\begin{equation}
\|Ku\|_{L^p(S^{n-1},C(X))}  \leq \tau \rho\|u\|_{L^p(S^{n-1},C(X))}  < \|u\|_{L^p(S^{n-1},C(X))} .
\end{equation}
In either case, $(I+K)u$ has a unique inverse which can be written using Neumann series.  The series
\begin{equation}
u = (I - K + K^2 - K^3 + \cdots)( Jf_{-} - T_1^{-1}S)
\end{equation}
converges uniformly in $(x,\theta)$, since we can set $p = \infty$, and hence yields a continuous function $u$, which must satisfy the inequality \eqref{AbsorptionEst} in the case that \eqref{AbsorptionCondition} holds, and \eqref{SmallnessEst} in the case that \eqref{SmallnessCondition} holds. Thus $u$ solves \eqref{IntegralRTE} and thus solves \eqref{SourceRTE}.  

\end{proof}

As a remark, note that if boundary source $f_{-}\in L^{p}(S^{n-1}, L^{\infty}(\partial X))$  but is not continuous, then the same argument as above will give a solution $u\in L^{p}(S^{n-1}, L^{\infty}(X))$.   


\section{Inverse Problem}

We recall that the inverse problem consists of deducing the source $S$ from boundary measurements of the specific intensity $u_{\e}$, assuming the coefficients $\sigma$ and $k$ are known. Here we also assume that $k$ is invariant under rotations. To proceed, we suppose that $v$ is a continuous solution to the adjoint equation
\begin{equation}\label{AdjointRTE}
-\theta \cdot \grad v + \sigma v = \int_{S^{n-1}} k(x,\theta\cdot\theta')v(x,\theta ')\, d \theta' \ ,
\end{equation}
obeying the boundary condition $v|_{\Gamma_+}$ prescribed. Since $k$ and $\sigma$ are known, we will suppose that $v$ is known.  We now make use of the integration by parts identity
\begin{equation}\label{IbyP}
\int_{X} v \theta \cdot \grad u_{\e} \, dx = - \int_{X} u_{\e} \theta \cdot \grad v\, dx + \int_{\partial X} u_{\e} v \, n \cdot \theta \, dx 
\end{equation}
where $n$ is the outward normal vector to $\partial X$.  If $v$ satisfies \eqref{AdjointRTE} and $u_{\e}$ satisfies \eqref{ModulatedxourceRTE}, then \eqref{IbyP} becomes
\begin{eqnarray}
& & -\int_{X} v \sigma_{\e} u_{\e}\, dx +\int_{X} v(x, \theta)\int_{S^{n-1}}k_{\e}(x,\theta\cdot\theta ') u_{\e}(x,\theta ')\, d\theta ' \, dx + \int_{X} v S_{\e} \, dx \\
&=& -\int_{X} u_{\e} \sigma v\, dx +\int_{X} u_{\e}(x, \theta)\int_{S^{n-1}} k(x,\theta\cdot\theta ')v(x,\theta ') \, d\theta '\,  dx \\
& & +\int_{\partial X} u_{\e} v \, n \cdot \theta \, dx.
\end{eqnarray} 
Rearranging and integrating over $\theta$ gives 
\begin{eqnarray}
& & \int_{S^{n-1}} \int_{\partial X} u_{\e} v \, n \cdot \theta \, dx \, d\theta \\
&=& \int_{S^{n-1}} \int_{X} (\sigma - \sigma_{\e})u_{\e} v\,  dx \, d\theta + \int_{S^{n-1}} \int_{X} v S_{\e}\, dx \, d\theta \\
& & +\int_{X}\int_{S^{n-1}}\int_{S^{n-1}}v(x, \theta)k_{\e}(x,\theta\cdot\theta ') u_{\e}(x,\theta ')\, d\theta '\, d\theta \, dx \\
& & -\int_{X}\int_{S^{n-1}}\int_{S^{n-1}}u_{\e}(x, \theta)k(x,\theta\cdot\theta ') v(x,\theta ') \, d\theta '\, d\theta \, dx. 
\end{eqnarray} 
We can now write the above as
\begin{equation}\label{PreAsymptotic}
\begin{split}
\int_{S^{n-1}} \int_{\partial X} u_{\e} v \, n \cdot \theta \, dx \, d\theta = & \int_{S^{n-1}} \int_{X} (\sigma - \sigma_{\e})u_{\e} v\,  dx \, d\theta + \int_{S^{n-1}} \int_{X} v S_{\e}\, dx \, d\theta \\
 & \, \, \, +\int_{X}\int_{S^{n-1}}\int_{S^{n-1}}(k_{\e}-k)(x,\theta\cdot\theta ')v(x, \theta)u_{\e}(x,\theta ')\, d\theta '\, d\theta \, dx. \\
\end{split}
\end{equation}
Since we can measure $u_{\e}|_{\partial X}$, and $v$ is assumed to be known, the left hand side of the above equation is a known quantity.  Now if $\e = 0$, then writing $u = u_0$, we have 
\begin{equation}
\int_{S^{n-1}} \int_{\partial X} u v \, n \cdot \theta \, dx \, d\theta = \int_{S^{n-1}} \int_{X} v S\, dx \, d\theta.
\end{equation}
This fails to take advantage of the ultrasound modulation, though.  If we define $u^1_{\e}$ by 
\begin{equation}
\e u^1_{\e} = u_{\e} - u,
\end{equation}
then $u^1_{\e}$ satisfies the equation
\begin{eqnarray}
\e \theta \cdot \grad u^1_{\e} &+& \e \sigma_{\e} u^1_{\e} = \e\int_{S^{n-1}}k_{\e}(x,\theta,\theta ')u^1_{\e}(x,\theta ') \, d\theta ' \\
                             & &+S_{\e}-S+\int_{S^{n-1}}(k_{\e}-k)(x,\theta\cdot\theta ')u(x,\theta ') \, d\theta '+(\sigma -\sigma_{\e})u. 
\end{eqnarray}
Since the whole second line above is $O(\e)$, it follows from Theorem \ref{RegularityTheorem} that $\|u^1_{\e}\|$ is $O(1)$.  Therefore 
\begin{eqnarray}
& & \int_{S^{n-1}} \int_{\partial X} u^1_{\e} v \, n \cdot \theta \, dx \, d\theta \\
&=& -\int_{S^{n-1}} \int_{X} \cos(q\cdot x + \varphi)\sigma u v\,  dx \, d\theta + \int_{S^{n-1}} \int_{X} v \cos(q\cdot x + \varphi)S\, dx \, d\theta \\
& & +\int_{X}\int_{S^{n-1}}\int_{S^{n-1}}\cos(q\cdot x + \varphi)k(x,\theta\cdot\theta ')v(x, \theta)u(x,\theta ')\, d\theta '\, d\theta \, dx + O(\e) . \hskip 20pt
\end{eqnarray}
Here the above left hand side is still known.  If we now vary $q$ and $\varphi$ and take $\e \rightarrow 0$, we can obtain the Fourier transform of the quantity $H_v$ defined by
\begin{eqnarray}
H_v(x) &=& -\int_{S^{n-1}} \sigma u v\, d\theta + \int_{S^{n-1}} v S\, d\theta \\
     & & +\int_{S^{n-1}}\int_{S^{n-1}}k(x,\theta\cdot\theta ')v(x, \theta)u(x,\theta ')\, d\theta '\, d\theta.
\end{eqnarray}
We note that the above expression for $H_v$ has two unknowns, $u$ and $S$.
However, equation \eqref{SourceRTE} also relates $u$ and $S$.  If we use \eqref{SourceRTE} to substitute for $S$ in the expression for $H_v$, nearly everything cancels and we find that
\begin{equation}
\label{Fxnal}
H_v(x) = \int_{S^{n-1}} v(x,\theta) \theta \cdot \grad u(x,\theta) \, d\theta.
\end{equation}

\begin{remark} 
We note that $H_v(x)$ is a so-called internal functional; it is known for every point $x\in X$. Such internal functionals, which are determined from boundary measurements, play the role of internal measurements and are a generic feature of hybrid inverse problems.
\end{remark}

The inverse problem now consists of recovering $S$ from $H_v$. Suppose $x_0 \in X$. Then by Theorem \ref{ControlTheorem}, we can arrange for $v(x_0,\theta)$ to be any continuous function in $\theta$.  Therefore knowing $H_v(x_0)$ for all $v$ solving \eqref{AdjointRTE} is equivalent to knowing $\theta \cdot \grad u(x_0,\theta)$ for each $\theta \in S^{n-1}$.  Since this can be done for any $x_0 \in X$, we can recover from $H_v$ knowledge of $\theta \cdot \grad u(x,\theta)$ for all  $(x,\theta) \in X \times S^{n-1}$.  Then using the formula
\begin{equation}
u(x, \theta) = u(x + \tau_{+}\theta,\theta) - \int_0^{\tau_{+}}\theta \cdot \grad u(x + t\theta,\theta) dt
\end{equation}
and the fact that we know $u|_{\Gamma_{+}}$, we see that we can recover $u(x,\theta)$ at each point in $X \times S^{n-1}$.  Then using \eqref{SourceRTE}, we can recover $S$.  

We now examine the stability of the reconstruction described above and thereby prove Theorem~\ref{IPTheorem}.
\begin{proof}[Proof of Theorem~\ref{IPTheorem}.]
Suppose $S_1$ and $S_2$ are different sources.  Let $u_{1,\e}$ and $u_{2,\e}$ be the solutions of \eqref{ModulatedxourceRTE} corresponding to $S_1$ and $S_2$.  Then since equation \eqref{PreAsymptotic} applies to $u_{1,\e}$ and $u_{2,\e}$, we have
\begin{equation}\label{PreAsymptoticStability}
\begin{split}
  &\int_{S^{n-1}} \int_{\partial X} (u_{1,\e}-u_{2,\e}) v \, n \cdot \theta \, dx \, d\theta \\
= & \, \int_{S^{n-1}} \int_{X} (\sigma - \sigma_{\e})(u_{1,\e}-u_{2,\e}) v\,  dx \, d\theta + \int_{S^{n-1}} \int_{X} v (S_{1,\e} - S_{2,\e})\, dx \, d\theta \\
 & \, \, \, +\int_{X}\int_{S^{n-1}}\int_{S^{n-1}}(k_{\e}-k)(x,\theta\cdot\theta ')v(x, \theta)(u_{1,\e}-u_{2,\e})(x,\theta ')\, d\theta '\, d\theta \, dx. \\
\end{split}
\end{equation}
If $\e = 0$ we have
\begin{equation}
\int_{S^{n-1}} \int_{\partial X} (u_{1,0}-u_{2,0}) v n \cdot \theta \, dx \, d\theta = \int_{S^{n-1}} \int_{X} v (S_{1} - S_{2})\, dx \, d\theta.
\end{equation}
Then \eqref{PreAsymptoticStability} becomes
\begin{equation}
\begin{split}
  &\int_{S^{n-1}} \int_{\partial X} ((u_{1,\e} - u_{1,0}) -(u_{2,\e} - u_{2,0})) v \, n \cdot \theta \, dx \, d\theta \\
= & \, \int_{S^{n-1}} \int_{X} (\sigma - \sigma_{\e})(u_{1,\e}-u_{2,\e}) v\,  dx \, d\theta   + \int_{S^{n-1}} \int_{X} v ((S_{1,\e}- S_1) - (S_{2,\e}-S_2))\, dx \, d\theta \\\\
 & \, \, \, +\int_{X}\int_{S^{n-1}}\int_{S^{n-1}}(k_{\e}-k)(x,\theta\cdot\theta ')v(x, \theta)(u_{1,\e}-u_{2,\e})(x,\theta ')\, d\theta '\, d\theta \, dx. 
\end{split}
\end{equation}
Now note that 
\begin{eqnarray}
\int_{S^{n-1}} \int_{\partial X} (u_{1,\e} - u_{1,0}) v \, n \cdot \theta \, dx \, d\theta &=& \int_{\Gamma_{+}}(u_{1,\e} - u_{1,0}) v \, n \cdot \theta \, dx \, d\theta \\
 &=& \e \int_{\Gamma_{+}}\Lambda^{\e}_{S_1} v \, n \cdot \theta \, dx \, d\theta. \end{eqnarray}
Therefore we can rewrite \eqref{PreAsymptoticStability} as 
\begin{equation}
\begin{split}
  &\int_{S^{n-1}} \int_{\partial X} (\Lambda^{\e}_{S_1}-\Lambda^{\e}_{S_2}) v\, n \cdot \theta \, dx \, d\theta \\
= & \, \int_{S^{n-1}} \int_{X} \cos(q\cdot x + \varphi)\sigma(u_{1}-u_{2}) v\,  dx \, d\theta + \int_{S^{n-1}} \int_{X} v \cos(q\cdot x + \varphi)(S_1 - S_2)\, dx \, d\theta\\
 & \, \, \, +\int_{X}\int_{S^{n-1}}\int_{S^{n-1}}\cos(q\cdot x + \varphi)k(x,\theta\cdot\theta ')v(x, \theta)(u_1 - u_2)(x,\theta ')\, d\theta '\, d\theta \, dx +O(\e).
\end{split}
\end{equation}
Using the reasoning described above, we find that
\begin{equation*}
\int_{S^{n-1}} \int_{\partial X} (\Lambda^{\e}_{S_1}-\Lambda^{\e}_{S_2}) v \, n \cdot \theta \, dx \, d\theta \\
= \hat{H}_{v,1} - \hat{H}_{v,2} +O(\e).
\end{equation*}
It follows that for a fixed $v$ solving \eqref{AdjointRTE}, we have
\begin{equation}
\|\Lambda^{\e}_{S_1}-\Lambda^{\e}_{S_2}\|_{L^1(\Rn \times \{0,\frac{\pi}{2}\}, C(\partial X))}\|v\|_{L^1(S^{n-1},C(\partial X))} + O(\e) \geq \|\hat{H}_{v,1} - \hat{H}_{v,2}\|_{L^1(\Rn)},
\end{equation}
which means that 
\begin{equation}
\|H_{v,1} - H_{v,2}\|_{L^{\infty}(X)} \leq \|\Lambda^{\e}_{S_1}-\Lambda^{\e}_{S_2}\|_{L^1(\Rn \times \{0,\frac{\pi}{2}\}, C(\partial X))}\|v\|_{L^1(S^{n-1},C(\partial X))} + O(\e).
\end{equation}
Now by Theorem \ref{ControlTheorem}, we can choose $v$ such that $v(x_0,\theta)$, as a function of $\theta$, is an approximation of identity centered at $\theta_0$.  Moreover, we have the estimate
\begin{equation}
\|v\|_{L^1(S^{n-1},C(X))} \leq c\|v(x_0,\theta)\|_{L^1(S^{n-1})} \leq c,
\end{equation}
where $c$ only depends on $X, \sigma$, and $k$. Then we obtain the estimate
\begin{equation}
|\theta_0 \cdot \grad(u_1 - u_2)(x_0,\theta_0)| \leq  c\|\Lambda^{\e}_{S_1}-\Lambda^{\e}_{S_2}\|_{L^1(\Rn \times \{0,\frac{\pi}{2}\}, C(\partial X))}+ O(\e).
\end{equation}
It follows by integration that  
\begin{equation}
\tau^{-1}\|u_1 - u_2\|_{L^{\infty}(X \times S^{n-1})} \leq c\left( \|\Lambda^0_{S_1} - \Lambda^0_{S_2}\|_{C(\Gamma_{+})}+\|\Lambda^{\e}_{S_1}-\Lambda^{\e}_{S_2}\|_{L^1(\Rn \times \{0,\frac{\pi}{2}\}, C(\partial X))} \right)+ O(\e)
\end{equation}
and thus
\begin{equation}
C\|S_1 - S_2\|_{L^{\infty}(X)} \leq \|\Lambda^0_{S_1} - \Lambda^0_{S_2}\|_{C(\Gamma_{+})}+ \|\Lambda^{\e}_{S_1}-\Lambda^{\e}_{S_2}\|_{L^1(\Rn \times \{0,\frac{\pi}{2}\}, C(\partial X))}+ O(\e) ,
\end{equation}
as claimed.
\end{proof}

Note that if we are interested in the stability of reconstructing the source from the functional $H_v$, we obtain the estimate
\begin{equation}
C\|S_1 - S_2\|_{L^{\infty}(X)} \leq \|H_{v,1} - H_{v,2}\|_{L^{\infty}(X)} + \|\Lambda^0_{S_1} - \Lambda^0_{S_2}\|_{C(\Gamma_{+})}.
\end{equation}
In particular, the $O(\e)$ error disappears, since this comes from the problem of recovering $H_v$ from $\Lambda^{\e}_S$.  This is worth recording as a proposition, for comparison to the equivalent stability result in ~\cite{BalSchotland_PhysRev2014}.  

\begin{prop}
If $S_1, S_2$ are continuous functions on $X \times S^{n-1}$, then for proper choice of $v$, 
\begin{equation}
C\|S_1 - S_2\|_{L^{\infty}(X)} \leq \|H_{v,1} - H_{v,2}\|_{L^{\infty}(X)} + \|\Lambda^0_{S_1} - \Lambda^0_{S_2}\|_{C(\Gamma_{+})}.
\end{equation}
where $H_{v,1}, H_{v,2}$ are the functionals defined by $S_1$ and $S_2$ in terms of \eqref{Fxnal}.
\end{prop}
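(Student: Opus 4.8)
The plan is to rerun the argument already used to prove Theorem~\ref{IPTheorem}, but starting one step later: the $O(\e)$ term in that proof enters \emph{only} through the reconstruction of the internal functional $H_v$ from the modulated boundary data $\Lambda^{\e}_S$, so if we take $H_{v,1}$ and $H_{v,2}$ as directly given, that error is absent. Concretely, I would begin from the internal functional identity \eqref{Fxnal}, which holds for $u_1$ and $u_2$ separately once $v$ solves the adjoint equation \eqref{AdjointRTE}. Here $u_i$ denotes the unmodulated solution of \eqref{SourceRTE} with source $S_i$ and vanishing incoming trace, so that $\Lambda^0_{S_i} = u_i|_{\Gamma_+}$. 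Subtracting the two instances of \eqref{Fxnal} gives
\[
H_{v,1}(x) - H_{v,2}(x) = \int_{S^{n-1}} v(x,\theta)\,\theta \cdot \grad (u_1 - u_2)(x,\theta)\, d\theta .
\]

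Next I would fix an arbitrary point $x_0 \in X$ and direction $\theta_0 \in S^{n-1}$ and invoke Theorem~\ref{ControlTheorem} to produce a solution $v$ of \eqref{AdjointRTE} whose trace at $x_0$, as a function of $\theta$, is an approximation to the identity concentrated at $\theta_0$, together with the bound $\|v\|_{L^1(S^{n-1},C(X))} \le c\,\|v(x_0,\cdot)\|_{L^1(S^{n-1})} \le c$ with $c=c(X,\sigma,k)$. Feeding this $v$ into the displayed identity and passing to the limit in the approximate identity yields the pointwise estimate
\[
\bigl|\theta_0 \cdot \grad (u_1-u_2)(x_0,\theta_0)\bigr| \le c\,\|H_{v,1} - H_{v,2}\|_{L^{\infty}(X)} .
\]
With pointwise control of $\theta\cdot\grad(u_1-u_2)$ in hand, I would integrate along characteristics using $u(x,\theta) = u(x+\tau_{+}\theta,\theta) - \int_0^{\tau_{+}} \theta\cdot\grad u(x+t\theta,\theta)\,dt$ together with the known outgoing trace difference $\Lambda^0_{S_1}-\Lambda^0_{S_2} = (u_1-u_2)|_{\Gamma_+}$, obtaining $\tau^{-1}\|u_1-u_2\|_{L^{\infty}(X\times S^{n-1})} \le c\bigl(\|\Lambda^0_{S_1}-\Lambda^0_{S_2}\|_{C(\Gamma_+)} + \|H_{v,1}-H_{v,2}\|_{L^{\infty}(X)}\bigr)$. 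Finally, solving \eqref{SourceRTE} for the source, $S_i = \theta\cdot\grad u_i + \sigma u_i - \int_{S^{n-1}} k\,u_i\,d\theta'$, and subtracting, the right-hand side is bounded by the already-controlled quantities $\theta\cdot\grad(u_1-u_2)$ and $u_1-u_2$, which gives the claimed estimate with $C=C(X,\sigma,k)$.

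The only genuine subtlety, exactly as in the proof of Theorem~\ref{IPTheorem}, is the approximation-to-identity step: Theorem~\ref{ControlTheorem} prescribes only \emph{continuous} traces at $x_0$ and controls $v$ only through the norm of that trace, so one must take a sequence $h_m\to\delta_{\theta_0}$ of nonnegative continuous functions of unit mass, use the corresponding solutions $v_m$, and check that $\|v_m\|_{L^1(S^{n-1},C(X))}$ stays bounded (immediate from the $\|v\|\le c\|v(x_0,\cdot)\|_{L^1}$ bound) and that $\int v_m(x_0,\theta)\,\theta\cdot\grad(u_1-u_2)(x_0,\theta)\,d\theta \to \theta_0\cdot\grad(u_1-u_2)(x_0,\theta_0)$; the latter uses continuity of $\theta\cdot\grad(u_1-u_2)$ on $X\times S^{n-1}$, which follows from \eqref{SourceRTE} and the regularity of $u_1,u_2$ established in Section~2. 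The phrase ``for proper choice of $v$'' refers precisely to this family of solutions furnished by Theorem~\ref{ControlTheorem}, one for each $(x_0,\theta_0)$; since their norms are uniformly bounded, the term $\|H_{v,1}-H_{v,2}\|_{L^{\infty}(X)}$ on the right-hand side is controlled uniformly over the family. Everything else is bookkeeping identical to the proof already given, with the $O(\e)$ term simply never appearing because no step invokes $\Lambda^{\e}_S$.
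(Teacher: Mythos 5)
Your proposal is correct and follows essentially the same route as the paper: the paper obtains this proposition precisely by observing that the $O(\e)$ term in the proof of Theorem~\ref{IPTheorem} arises only from recovering $H_v$ out of $\Lambda^{\e}_S$, and then rerunning the same chain (approximate identity at $(x_0,\theta_0)$ via Theorem~\ref{ControlTheorem}, integration along characteristics using $\Lambda^0_S$, and recovery of $S$ from \eqref{SourceRTE}) starting from $H_v$ directly. Your added discussion of the approximation-to-identity limit and the $v$-dependence on $(x_0,\theta_0)$ is a faithful elaboration of what the paper leaves implicit, not a different argument.
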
 


\section{Controllability}

The goal of this section is to prove Theorem \ref{ControlTheorem}.  First we need the following lemma, which allows us to propagate solutions to larger and larger sets.

\begin{lemma}\label{PropagationLemma}
Suppose $X_1 \Subset  X_2 \subset X$ are concentric open balls and $X_{12} = X_2 \setminus \bar{X_1}$.  Let 
\begin{equation}
\Gamma_i = \partial X_i \times S^{n-1} \ ,
\end{equation}  
for $i = 1,2$. Now suppose that $u_1$ is continuous on $\bar{X_1}$, and solves 
\begin{equation}\label{PropagationRTE}
\theta \cdot \grad u_1 + \sigma(x) u_1 = \int_{S^{n-1}} k(x,\theta,\theta ')u(x,\theta ')\, d \theta' 
\end{equation}
on $X_1$. Then for any $p$ with $1 \leq p \leq \infty$, there exists a solution $u \in L^p(S^{n-1},L^{\infty}(X_2))$ to \eqref{PropagationRTE} on $X_2$ such that $u = u_1$ on $X_1$. Moreover 
\begin{equation}\label{PropagationEst}
\|u\|_{L^p(S^{n-1},L^{\infty}(X_2))} \leq C\|u_1\|_{L^p(S^{n-1}, C(X_1))},
\end{equation}
where $C$ is a constant that depends only on $\sigma$, $k$, and the diameters of $X_1$ and $X_2$.  
\end{lemma}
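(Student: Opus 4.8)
\emph{Sketch of proof.} The plan is to build the extension shell by shell. First I would subdivide the annulus $X_{12}$ into finitely many concentric shells $X_1 = Z_0 \Subset Z_1 \Subset \cdots \Subset Z_N = X_2$, each thin enough that the operator introduced below is a contraction (how thin is dictated by $\sigma$, $k$ and the radii, see the third paragraph). It then suffices to carry out one step, i.e. to treat the case in which $X_{12}$ is a single thin shell, and to compose the $N$ resulting estimates. So assume $X_{12}$ is thin; write $\Sigma_0 = \partial X_1$ and $\Sigma = \partial X_2$.

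Next I would set up the extension as a fixed point. On $X_1$ put $u = u_1$; the glued function will solve \eqref{PropagationRTE} on $X_2$ precisely when $u$, viewed from the $X_{12}$ side, has the same trace as $u_1$ on all of $\Sigma_0\times S^{n-1}$, so that no jump survives along characteristics crossing $\Sigma_0$. For $(x,\theta)\in X_{12}\times S^{n-1}$ there are (up to a null set of tangential rays) three mutually exclusive possibilities for the line through $x$ in direction $\theta$: (a) the backward ray from $x$ meets $\bar{X_1}$, at a first point $q = x-a\theta\in\Sigma_0$ with $\theta$ pointing out of $X_1$ there; (b) the backward ray misses $\bar{X_1}$ but the forward ray meets $\bar{X_1}$, at a first point $q = x+b\theta\in\Sigma_0$ with $\theta$ pointing into $X_1$ there; (c) the line misses $\bar{X_1}$ altogether. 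In case (a) I would define $u$ by the characteristic formula with inflow datum the \emph{outgoing} trace of $u_1$,
\[
u(x,\theta) = B(a,x,\theta)\,u_1(q,\theta) + \int_0^a B(t,x,\theta)\Big[\int_{S^{n-1}} k(x-t\theta,\theta\cdot\theta')\,u(x-t\theta,\theta')\,d\theta'\Big]\,dt .
\]
In case (b) I would require $u(q,\theta)$ to equal the \emph{incoming} trace of $u_1$ (part of $u_1|_{\Gamma_-(X_1)}$) and solve the transport ODE \emph{backward} along the line from $q$ to $x$ and on to $\Sigma$; this fixes $u$ on $X_{12}$ along that line and, at the point where the line enters $X_2$, the inflow datum $g$ on $\Gamma_-(\Sigma)$. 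In case (c), $u$ is given by the usual integral formula with an inflow datum $g$ on $\Gamma_-(\Sigma)$ that is still free; I would choose it so that, together with the values forced in case (b), $g$ is continuous on $\Gamma_-(\Sigma)$ (a Tietze extension of the part already determined, with no increase of the sup norm), so that the extension is continuous up to $\bar{X_2}$ and the shells can be chained.

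The third step is to read the above as a fixed-point equation $u = \mathcal F + \mathcal K u$ on $L^p(S^{n-1},C(\bar{X_{12}}))$, where $\mathcal F$ collects the data terms, bounded by $C_0\|u_1\|_{L^p(S^{n-1},C(X_1))}$ with $C_0$ controlled by the maximal value of $|B^{-1}|$ along chords of the shell, and $\mathcal K$ is the scattering–transport operator restricted to the shell. On the part of $X_{12}$ governed by case (a), $\mathcal K$ obeys the same bound as $T_1^{-1}A_2$ in Theorem \ref{RegularityTheorem}, namely operator norm $\le \rho/(\rho+\alpha)$ or $\le \tau\rho$, hence $<1$. On the parts governed by (b) and (c) the characteristic is traversed backward, so the bound carries a factor $e^{\ell\|\sigma\|_{L^\infty}}$ with $\ell$ the length of the relevant chord of the shell, giving operator norm $\lesssim \rho\,\ell\,e^{\ell\|\sigma\|_{L^\infty}}$; the point of passing to a shell of radial width $h$ is that then $\ell \le \sqrt{2(\operatorname{diam}X_2)\,h}\to 0$ as $h\to 0$, so this factor is $<1$ once $h$ is small, which is what fixes $N$ in terms of $\operatorname{diam}X_1$, $\operatorname{diam}X_2$, $\|\sigma\|_{L^\infty}$ and $\rho$. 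A Neumann series then gives the unique fixed point $u$, continuous on $\bar{X_{12}}$, with $\|u\|_{L^p(S^{n-1},C(\bar{X_{12}}))}\le (1-\|\mathcal K\|)^{-1}\|\mathcal F\|\le C\|u_1\|_{L^p(S^{n-1},C(X_1))}$; matching of the traces on $\Sigma_0$ makes the glued function a solution of \eqref{PropagationRTE} on $X_2$ equal to $u_1$ on $X_1$, and \eqref{PropagationEst} follows after chaining the steps. Everything works for every $1\le p\le\infty$ because the averaging operator $f\mapsto\int_{S^{n-1}}k(\cdot,\theta\cdot\theta')f(\theta')\,d\theta'$ has $L^p(S^{n-1})$-norm $\le\rho$ by Schur's test, using that $\int_{S^{n-1}}k(x,\theta\cdot\theta')\,d\theta'$ is independent of $\theta$ by rotation invariance.

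I expect the main obstacle to be the contraction estimate for $\mathcal K$ on the ``upstream'' region of case (b): there the inflow data $u_1|_{\Gamma_-(X_1)}$ on the inner sphere must be propagated \emph{backward} across the shell, an amplifying operation with gain as large as $e^{(\operatorname{diam}X_2)\|\sigma\|_{L^\infty}}$, so on a thick annulus the naive iteration need not converge. The reduction to thin shells, where chords reaching the inner sphere are only $O(\sqrt h)$ long, is exactly what tames this term and is therefore the crux of the argument; a secondary point is verifying that $g$ on $\Sigma$ can be taken continuous, which is what keeps the iteration inside the class of solutions that are continuous up to the boundary.
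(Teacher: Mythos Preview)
Your approach is essentially the paper's: both subdivide the annulus into thin concentric shells, solve on each shell by the same three-case characteristic formula (backward ray meets $\partial X_1$, forward ray meets $\partial X_1$, or the line misses $\bar X_1$), and iterate with a contraction constant governed by the maximal chord length $\Delta = 2\sqrt{2r\delta+\delta^2}$ in the shell. One simplification worth noting: rather than using a Tietze extension to keep the outer boundary data continuous, the paper simply takes the free part of the data to be zero and runs the whole iteration in $L^p(S^{n-1},L^\infty)$ (iterating $T_0 u^{k+1}=Au^k$ with $a=\rho+\|\sigma\|_\infty$), which suffices because the lemma only claims $L^\infty$ regularity and the trace lemma (Lemma~\ref{TraceLemma}) still lets you chain shells.
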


The proof of Lemma \ref{PropagationLemma} requires a trace theorem.  Similar trace theorems can be found in ~\cite{Cessenat1985}, ~\cite{Cessenat1984}, as well as ~\cite{DautrayLions}, ~\cite{ChoulliStefanov}, and ~\cite{Bal_IP2009}.  To state the trace theorem, we will once again use the notation introduced in the beginning of Section 2.

\begin{lemma}\label{TraceLemma}
If $u, T_0 u \in L^p(S^{n-1},L^{\infty}(X))$, then for $1 \leq p \leq \infty$
\begin{equation}
\|u\|_{L^p(S^{n-1},L^{\infty}(\partial X))} \leq \tau\|T_0 u\|_{L^p(S^{n-1},L^{\infty}(X))} + \|u\|_{L^p(S^{n-1},L^{\infty}(X))}.
\end{equation} 
\end{lemma}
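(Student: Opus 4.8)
The plan is to reduce the estimate to a one-dimensional statement along lines in a fixed direction $\theta\in S^{n-1}$, and then to reassemble by taking the $L^p(S^{n-1})$ norm in $\theta$.

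Fix a direction $\theta$ for which both $u(\cdot,\theta)$ and $T_0u(\cdot,\theta)=\theta\cdot\grad u(\cdot,\theta)$ lie in $L^\infty(X)$; this holds for almost every $\theta$. Since the distributional derivative of $u(\cdot,\theta)$ in the direction $\theta$ equals the bounded function $T_0u(\cdot,\theta)$, the function $u(\cdot,\theta)$ has a representative that is absolutely continuous along almost every line parallel to $\theta$, and along each such line
\begin{equation*}
u(y+t\theta,\theta)-u(y,\theta)=\int_0^t T_0u(y+s\theta,\theta)\,ds .
\end{equation*}
In particular this defines a trace of $u(\cdot,\theta)$ on $\partial X$ taken along these lines, so that $\|u(\cdot,\theta)\|_{L^\infty(\partial X)}$ is meaningful.

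Now the key pointwise bound. Let $x\in\partial X$ with $\theta\cdot n(x)\neq 0$ (the exceptional set has surface measure zero), and suppose $x$ lies on one of the ``good'' lines above. If $\theta\cdot n(x)<0$ choose $\ell>0$ with $x+s\theta\in X$ for $0<s<\ell$ and $x+\ell\theta\in\partial X$; then $\ell\le\tau$. For every $t\in(0,\ell)$ the identity above gives $u(x,\theta)=u(x+t\theta,\theta)-\int_0^t T_0u(x+s\theta,\theta)\,ds$. Averaging in $t$ over $(0,\ell)$,
\begin{equation*}
|u(x,\theta)|\le \frac1\ell\int_0^\ell|u(x+t\theta,\theta)|\,dt+\frac1\ell\int_0^\ell\!\int_0^t|T_0u(x+s\theta,\theta)|\,ds\,dt .
\end{equation*}
The first term is at most $\|u(\cdot,\theta)\|_{L^\infty(X)}$ and the second is at most $\tfrac{\ell}{2}\|T_0u(\cdot,\theta)\|_{L^\infty(X)}\le\tau\|T_0u(\cdot,\theta)\|_{L^\infty(X)}$. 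When $\theta\cdot n(x)>0$ the same argument applies with $\theta$ replaced by $-\theta$. Taking the essential supremum over $x\in\partial X$ yields
\begin{equation*}
\|u(\cdot,\theta)\|_{L^\infty(\partial X)}\le \|u(\cdot,\theta)\|_{L^\infty(X)}+\tau\|T_0u(\cdot,\theta)\|_{L^\infty(X)} .
\end{equation*}

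Finally, apply $\|\cdot\|_{L^p(S^{n-1})}$ in $\theta$ to both sides and use Minkowski's inequality to split the right-hand side, which gives the claim. I expect the only genuinely delicate point to be the first step — justifying the fundamental theorem of calculus along lines, and hence the existence of a bona fide trace, since $u$ is assumed only bounded (not continuous) in $x$. This can be handled by mollifying $u$ in the $\theta$-direction and passing to the limit, or by invoking the one-dimensional absolute-continuity characterization of Sobolev functions on almost every line; along the way one should check that $\theta\mapsto\|u(\cdot,\theta)\|_{L^\infty(\partial X)}$ is measurable, which follows from the same line-by-line construction. The averaging bound and the passage to the $L^p$ norm in $\theta$ are then routine.
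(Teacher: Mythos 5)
Your proof is correct and follows essentially the same route as the paper: the fundamental theorem of calculus along the chord in direction $\theta$, a pointwise bound on the boundary value by the interior sup plus $\tau$ times the sup of $T_0u$, and then the $L^p$ norm in $\theta$. The averaging in $t$ is a slightly more careful way of handling the measure-zero issues that the paper glosses over with ``for almost every $(x,\theta)$,'' but the substance is identical.
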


\begin{proof}
For almost every $(x,\theta)$, we can write  
\begin{equation}
u(x \pm \tau_{\pm}(x,\theta)\theta, \theta) = \int_0^{\tau_{\pm}(x,\theta)} T_0 u(x \pm t\theta,\theta)dt + u(x,\theta) .
\end{equation}
Now $(x \pm \tau_{\pm}(x,\theta)\theta, \theta ) \in \Gamma_{\pm}$ and $\partial X \times S^{n-1}  = \Gamma_{+} \cup \Gamma_{-}$ up to a set of measure zero. Thus for almost every $\theta$,   
\begin{equation}
\|u(\cdot, \theta)\|_{L^{\infty}(\partial X)} \leq \tau \|T_0u(\cdot,\theta)\|_{L^{\infty}(X)} + \|u(\cdot, \theta)\|_{L^{\infty}(X)}.
\end{equation}
The lemma follows by taking $L^p$ norms in $\theta$.  
\end{proof}

Now we can prove Lemma \ref{PropagationLemma}.  
\begin{proof}[Proof of Lemma \ref{PropagationLemma}]
First, we will show that we can construct a function $u_{12} \in L^p(S^{n-1},L^{\infty}(X_{12}))$ which solves \eqref{PropagationRTE} with the boundary condition $u_{12}|_{\Gamma_1} = u_1|_{\Gamma_1}$. To do this, we will begin by picking a continuous function $u_{12}^0$ on $X_{12} \times S^{n-1}$ which satisfies the boundary condition $u_{12}^0|_{\Gamma_1} = u_1|_{\Gamma_1}$ and the estimate
\begin{equation}
\|u_{12}^0\|_{L^p(S^{n-1}, C(X_{12}))} \leq \|u_{1}\|_{L^p(S^{n-1}, C(\partial X_1))}. 
\end{equation}  
It follows from Lemma \ref{TraceLemma} that
\begin{equation}
\|u_{12}^0\|_{L^p(S^{n-1}, C(X_{12}))} \leq c \|u_{1}\|_{L^p(S^{n-1}, C(X_1))} ,
\end{equation}
for some constant $c$ depending on $\sigma$, $k$, and the possibly the diameter of $X_1$.

Now we can define $u^k_{12}$ iteratively by solving the problem
\begin{eqnarray}
T_0u^{k+1}_{12} &=& Au^k_{12} \mbox{ on } X_{12} ,  \\
u^{k+1}_{12}|_{\Gamma_1} &=& u_1|_{\Gamma_1},
\end{eqnarray}
The solution to the general problem 
\begin{eqnarray}
T_0 w &=& Q \mbox{ on } X_{12} , \\
w|_{\Gamma_1} &=& h|_{\Gamma_1}. 
\end{eqnarray}
of this form can be constructed explicitly by
\begin{eqnarray}
\nonumber
w(x,\theta) &=& \int_0^{\tau_{-}(x,\theta)} Q(x - t\theta, \theta) dt + h(x - \tau_{-}(x, \theta)\theta, \theta), \mbox{ if } x - \tau_{-}(x,\theta)\theta \in \partial X_1 \\
\nonumber
w(x,\theta) &=& \int_0^{\tau_{+}(x,\theta)} Q(x + t\theta, \theta) dt + h(x + \tau_{+}(x, \theta)\theta, \theta), \mbox{ if } x + \tau_{+}(x,\theta)\theta \in \partial X_1 \\
w(x,\theta) &=& \int_0^{\tau_{-}(x,\theta)} Q(x - t\theta, \theta) dt, \mbox{ otherwise. } 
\end{eqnarray}
Here $\tau_{\pm}$ are defined for $(x,\theta) \in X_{12} \times S^{n-1}$ with respect to the boundary of $X_{12}$.  

Let $\Delta = \max\{\tau_{\pm}(x,\theta) |(x, \theta) \in X_{12}\}$.  When $h \equiv 0$, we obtain from the expressions above that for any fixed $\theta$,
\begin{equation}
\|w(\cdot, \theta)\|_{L^{\infty}(X_{12})} \leq \Delta \|Q(\cdot, \theta)\|_{L^{\infty}(X_{12})}.
\end{equation}
Therefore
\begin{equation}
\|w\|_{L^p(S^{n-1}, L^{\infty}(X_{12}))} \leq \Delta \|Q\|_{L^p(S^{n-1}, L^{\infty}(X_{12}))}.
\end{equation}
Now if $u^k_{12}$ are defined iteratively as described above and $v^k$ are defined by $v^{k+1} = u^{k+1} - u^k$, it then follows that 
\begin{eqnarray}
T_0v^{k+1}_{12} &=& Av^k_{12} \mbox{ on } X_{12}  , \\
v^{k+1}_{12}|_{\Gamma_1} &=& 0.  
\end{eqnarray}
Therefore 
\begin{equation}
\|v^{k+1}\|_{L^p(S^{n-1}, L^{\infty}(X_{12}))} \leq a \Delta \|v^k\|_{L^p(S^{n-1}, L^{\infty}(X_{12}))} ,
\end{equation}
where 
\begin{equation}
a = \rho + \|\sigma\|.
\end{equation}
If $\Delta$ is sufficiently small, then $a\Delta < 1$. Thus the $v^k$ converge geometrically to zero, and so $u_{12}^k$ converge in the $L^p(S^{n-1},L^{\infty}(X_{12}))$ norm to a function $u_{12}$, where  
\begin{equation}
\|u_{12}\|_{L^p(S^{n-1}, L^{\infty}(X_{12}))} \leq \frac{1}{1-a\Delta}\|u^0_{12}\|_{L^p(S^{n-1}, L^{\infty}(X_{12}))} \leq \frac{c}{1-a\Delta}\|u_{1}\|_{L^p(S^{n-1}, L^{\infty}(X_1))}.
\end{equation}
Since $T_0u_{12}^{k+1} = Au_{12}^{k}$, one can check that $T_0u_{12}^k$ converges in $L^{\infty}$ as well, with a similar estimate applying.  It follows that $u_{12}^{k} \rightarrow u_{12}$ in $W  = \{u \in L^p(S^{n-1}, L^{\infty}(X_{12})) : T_0u \in L^p(S^{n-1}, L^{\infty}(X_{12}))\}$.  Then $u_{12}$ has trace $u_{12}|_{\Gamma_1} = u_1|_{\Gamma_1}$ by Lemma \ref{TraceLemma}, so we can let 
\begin{equation}
u(x,\theta) = \left\{ \begin{array}{ll} u_{1}(x,\theta)  & \mbox{ if } x \in X_1  \\
                                        u_{12}(x,\theta) & \mbox{ if } x \in X_{12} \\ \end{array}\right.
\end{equation}
and check that $u$ is a weak solution to \eqref{PropagationRTE}.  It follows that $u$ must be the unique $L^p(S^{n-1}, L^{\infty}(X_2))$ solution to \eqref{PropagationRTE} on $X_2$ with $L^p(S^{n-1}, L^{\infty}(\Gamma_{2,-}))$ boundary conditions guaranteed by Theorem \ref{RegularityTheorem}.  Moreover, it is easy to see that $u$ must satisfy the estimate \eqref{PropagationEst} since $u_{12}$ does.

If $\Delta$ is not small enough to guarantee that $a\Delta < 1$, we can repeat the above process.  Note that if $X_1$ and $X_2$ are concentric balls of diameter $r$ and $r + \delta$, respectively, then 
\begin{equation}
\Delta = 2 \sqrt{2r\delta + \delta^2}.
\end{equation}
Therefore if ${1}/{a}$ is sufficiently small, 
\begin{equation}
\delta = \min \left\{ \frac{1}{10ar}, \frac{1}{10a} \right\}
\end{equation}
would give $a\Delta < 1$.  Since the harmonic series diverges, it follows that only finitely many repetitions of the above process are required to obtain a solution on a ball of any radius, satisfying the correct estimate. Note, however, that this implies that the constant $C$ in the statement of Lemma \ref{PropagationLemma} blows up as $a$ or $\Delta$ go to infinity.

\end{proof}

\begin{proof}[Proof of Theorem \ref{ControlTheorem}]

We begin by considering the case where $\tau a < 1/2$.  Note that this occurs as long as $X$ is sufficiently small. Now suppose $x_0 \in X$, and $h(\theta)$ is continuous.  We want to construct a solution $v$ to the RTE with the property that $v(x_0,\theta) = h(\theta)$. To begin, consider the equation
\begin{eqnarray}
\theta \cdot \grad v_0 + \sigma(x) v_0 &=& \int_{S^{n-1}} k(x,\theta,\theta ')v_0(x,\theta ')\, d \theta' , \\
      v_0(x,\theta)|_{\Gamma_{-}} &=& h(\theta) 
\end{eqnarray}
By Theorem \ref{RegularityTheorem}, the above has a unique continuous solution $v_0$ with the estimate
\begin{equation}
\|v_0\|_{L^p(S^{n-1}, C(X))} \leq \frac{1}{1 - \tau a}\|h\|_{L^p(S^{n-1})}.
\end{equation}
Now let $g_1(\theta) = h(\theta) - v_0(x_0, \theta)$. We can construct iteratively for $j \geq 1$ 
\begin{eqnarray}
\theta \cdot \grad w_j + \sigma(x) w_j &=& \int_{S^{n-1}} k(x,\theta,\theta ')w_j(x,\theta ')\, d \theta' , \\
      w_j(x,\theta)|_{\Gamma_{-}} &=& g_j(\theta), 
\end{eqnarray}
where
\begin{equation}
g_{j+1}(\theta) = g_j(\theta) - w_j(x_0,\theta).
\end{equation}
Now 
\begin{eqnarray}
g_j(\theta) - w_j(x_0, \theta) &=& \int_0^{\tau_{-}(x_0,\theta)} (\theta \cdot \grad w_j)(x_0 - t\theta, \theta)\, dt \\
                                  &=& \int_0^{\tau_{-}(x_0,\theta)} (A w_j)(x_0 - t\theta, \theta) \,dt.
\end{eqnarray}
Then for a fixed $\theta$, 
\begin{equation}
|g_{j+1}(\theta)| \leq \tau \|A w_j(\cdot, \theta)\|_{C(X)}.
\end{equation}
Therefore
\begin{equation}
\|g_{j+1}\|_{L^p(S^{n-1})} \leq \tau \|A w_j(\cdot, \theta)\|_{L^p(S^{n-1},C(X))} \leq \tau a \|w_j\|_{L^p(S^{n-1},C(X))}.
\end{equation}
By Theorem \ref{RegularityTheorem}, it follows that 
\begin{equation}
\|g_{j+1}\|_{L^p(S^{n-1})} \leq \frac{\tau a}{1-\tau a} \|g_j\|_{L^p(S^{n-1})}.
\end{equation}
Since we are assuming that  $\tau a < \half$, we find that $\|g_{j}\|_{L^p(S^{n-1})}$ converges geometrically to $0$. {Using Theorem \ref{RegularityTheorem} again, we get}$ \|w_j\|_{L^p(S^{n-1},C(X))} \leq 2\|g_j\|_{L^p(S^{n-1})}$, so follows that $\|w_j\|_{L^p(S^{n-1},C(X))}$ converges geometrically to $0$, and thus the sum
\begin{equation}
v(x,\theta) = v_0 (x,\theta) + \sum w_j(x,\theta) 
\end{equation}
converges uniformly, by considering $p = \infty$.  The above yields a continuous function $v$, which has the property that $v(x_0, \theta) = h(\theta)$, and which solves 
\begin{equation}
\theta \cdot \grad v + \sigma(x) v = \int_{S^{n-1}} k(x,\theta,\theta ')v(x,\theta ')\, d \theta'
\end{equation}
with the boundary condition 
\begin{equation}
v|_{\Gamma_{-}} = h(\theta) + \sum g_j(\theta)
\end{equation}
and the estimate 
\begin{equation}
\|v\|_{L^p(S^{n-1},C(X))} \leq \frac{1}{1-\tau \alpha}\|h\|_{L^p(S^{n-1})} + \frac{1 - \tau a}{1 - 2\tau a}\|h\|_{L^p(S^{n-1})}.
\end{equation}

Now if $\tau a \geq \half$, then for $x_0 \in X$, we can begin by taking a small ball $X_0 \subset X$ around $x_0$.  If the diameter of $X_0$ is sufficiently small, then $\tau_{X_0} a < \half$, so by the above reasoning, there is a continuous function $v$ on $X_0$ such that $v(x_0, \theta) = h(\theta)$.  Then by Lemma \ref{PropagationLemma}, there is an $L^p(S^{n-1},L^{\infty}(X))$ solution to \eqref{PropagationRTE} on some large ball containing $X$, which is equal to $v$ on $X_0$.  Restricting this to $X$ finishes the proof of Theorem \ref{ControlTheorem}.
\end{proof}   

\begin{remark}
Note that since the constant $C$ in the statement of Lemma \ref{PropagationLemma} blows up as $a$ or $\Delta$ goes to infinity, it follows that the constant $C$ in the statement of Theorem \ref{ControlTheorem} also blows up as $a$ or the diameter of $X$ become large.  Thus the controllability of the RTE becomes worse as we approach the diffusion limit, which is to be expected.  
\end{remark}

\section{Two-Dimensional Constant Coefficient Case}

In this section we will consider a simplified version of the problem, where $X \subset \R^2$, and the coefficients $\sigma$ and $k$ are constant in $x$.  In this case, the angular variable $\theta$ now ranges over $S^1$, and we can provide a different proof of Theorem \ref{ControlTheorem} by analyzing the RTE in terms of Fourier series in the angular variable.  This simplified approach may have applications to numerical experiments.  It is possible that a similar approach with spherical harmonics works in three dimensions, but there the recursion relations are much more complicated, and it is not clear that the solutions converge.

If we parametrize $\theta \in S^1$ by writing $\theta = (\cos t, \sin t)$, for $t \in [0,2\pi]$, then the RTE from Theorem \ref{ControlTheorem} becomes
\begin{equation}\label{2DAdjointRTE}
\cos t \partial_x v + \sin t \partial_y v  + \sigma v = \int_0^{2\pi} k(t,t')v(x,t')\, d t'.
\end{equation}

We will prove the following version of Theorem \ref{ControlTheorem}.

\begin{prop}
For each $x_0 \in \R^2$ and $m \in \Z$, there exists a solution $v$ to \eqref{2DAdjointRTE} such that $v(x_0) = e^{imt}$ and $\|v\|_{L^{\infty}}$ is bounded uniformly in $m$.  
\end{prop}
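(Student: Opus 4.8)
The plan is to pass to the angular Fourier series, where the rotational invariance of $k$ makes the scattering operator diagonal, and then to observe that in the constant-coefficient setting the resulting infinite system for the Fourier coefficients can be solved in closed form by an ansatz that is holomorphic in the spatial variable.

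Concretely, I would first translate so that $x_0 = 0$ and set $z = x_1 + i x_2$, $\partial_z = \tfrac12(\partial_x - i\partial_y)$, $\partial_{\bar z} = \tfrac12(\partial_x + i\partial_y)$, so that $\cos t\,\partial_x + \sin t\,\partial_y = e^{it}\partial_z + e^{-it}\partial_{\bar z}$. Expanding $v(x,t) = \sum_{n\in\Z} v_n(x)\,e^{int}$ and using that the rotationally invariant kernel acts diagonally,
\[
  \int_0^{2\pi} k(t,t')\,e^{int'}\,dt' = \hat k_n\, e^{int}, \qquad \hat k_n = \int_0^{2\pi} k(s)\,e^{-ins}\,ds,
\]
with $\hat k_0 = 1$ and $|\hat k_n|\le 1$ for all $n$ (because $k\ge 0$ and $\int_0^{2\pi} k = 1$), equation \eqref{2DAdjointRTE} is seen to be equivalent to the three-term recursion
\[
  \partial_z v_{n-1} + \partial_{\bar z} v_{n+1} = (\hat k_n - \sigma)\, v_n, \qquad n\in\Z ,
\]
and the target condition $v(0,\cdot) = e^{imt}$ reads $v_n(0) = \delta_{nm}$.

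The key step is to look for a solution in which every $v_n$ is holomorphic in $z$; then all the $\partial_{\bar z}$ terms drop out and the recursion becomes triangular. Taking $v_n(x) = \gamma_{m-n}\, z^{m-n}$ for $n\le m$ and $v_n\equiv 0$ for $n>m$, the whole system collapses to the scalar recursion $(a+1)\gamma_{a+1} = (\hat k_{m-a}-\sigma)\gamma_a$ with $\gamma_0 = 1$, whose unique solution is $\gamma_a = \frac{1}{a!}\prod_{i=m-a+1}^{m}(\hat k_i - \sigma)$; one checks that this choice satisfies the recursion for every $n$, the case $n>m$ only requiring $\partial_z v_m = 0$, which holds since $v_m\equiv\gamma_0$ is constant. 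This produces the explicit candidate
\[
  v(x,t) = e^{imt}\sum_{a=0}^{\infty} \gamma_a\,\bigl(z e^{-it}\bigr)^{a}.
\]
Since $|\hat k_i - \sigma|\le 1+\sigma$, we get $|\gamma_a|\le (1+\sigma)^a/a!$, so the series together with all its $x$-derivatives converges locally uniformly, $v$ is a genuine classical solution of \eqref{2DAdjointRTE}, $v(0,t) = e^{imt}$, and $|v(x,t)|\le e^{(1+\sigma)|z|}$; on a fixed bounded set (in particular on $X$) this bound is independent of $m$. Superposing, $v = \sum_m \hat h_m v^{(m)}$, one then recovers Theorem \ref{ControlTheorem} in this setting for any $h$ with summable Fourier coefficients, hence for the approximate identities used in Section 3.

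I expect the only genuine content — as opposed to bookkeeping — to be the holomorphic-ansatz step: recognizing that the coupled three-term system in fact admits a solution whose spatial part is holomorphic, which is exactly what reduces the problem to the explicitly solvable scalar recursion for $\gamma_a$. Once that is in place, checking that the formal series is an honest solution (convergence, differentiation under the sum, the $n>m$ consistency) and extracting the $m$-uniform bound are immediate from the factorial decay. The one point to flag is that $v$ grows like $e^{(1+\sigma)|z|}$, so "$\|v\|_{L^\infty}$ bounded uniformly in $m$" should be read as a bound on compact subsets of $\R^2$ (equivalently on the domain $X$), which is all the controllability application requires.
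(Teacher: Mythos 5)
Your proof is correct and is essentially the paper's argument reflected: the paper populates the modes $n\ge m$ with powers of $\bar z$ (so the $\partial_z v_{n-1}$ term vanishes), while you populate $n\le m$ with powers of $z$ (so $\partial_{\bar z}v_{n+1}$ vanishes); either one-sided ansatz collapses the three-term recursion to the same explicitly solvable first-order recursion with the same factorial bounds. Your closing caveat --- that the bound uniform in $m$ holds on bounded sets rather than all of $\R^2$ --- applies equally to the paper's solution and is a fair reading of the proposition.
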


\begin{proof}
We begin with the ansatz 
\begin{equation}
v(x,t) = \sum_{n \in \Z} v_n(x) e^{int}.
\end{equation}
By the assumptions on the regularity of $k$, we can expand $k(t,t')$ in terms of $\{e^{int}\}$ and $\{e^{int'}\}$ as well.  Crucially, the assumption that $k$ obeys reciprocity implies that the expansion has the form
\begin{equation}
k(t,t')= \sum_{n \in \Z} k_n e^{in(t-t')}.
\end{equation}
Therefore
\begin{equation}
\int_0^{2\pi} k(t,t')v(x,t')\, d t' = \sum_{n \in \Z} k_n v_n(x) e^{int}.
\end{equation}
Now we can rewrite \eqref{2DAdjointRTE} as
\begin{equation}
e^{it}\partial_z v + e^{-it}\partial_{\bar{z}} v + \sigma v = \int_0^{2\pi} k(t,t')v(x,t')\, d t',
\end{equation}
where $\partial_z = \half (\partial_x - i\partial_y)$ and $\partial_{\bar{z}} = \half(\partial_x + \partial_y)$. Substituting into the expansions for $v$ and $k$ and examining the $e^{int}$ term, gives the system of equations
\begin{equation}
\partial_z v_{n-1} + \partial_{\bar{z}}v_{n+1} = (k_n - \sigma) v_n.
\end{equation}
Now suppose $m \in \Z$. Let $v_{n} = 0$ for $n < m$, $v_m = 1$, and 
\begin{equation}\label{2Dvn}
v_n = \frac{\Pi_{j=m}^{n-1}(k_n - \sigma)}{(n-m)!}\bar{z}^{n-m}
\end{equation}
for $n > m$.  The equations for $e^{int}, n < m$ are automatically satisfied, since each term in those equations is zero.  For $n \geq m$, note that the $\partial_z v_{n-1}$ is always zero.  Therefore the equation for $e^{int}$ reads
\begin{equation}
 \partial_{\bar{z}}v_{n+1} = (k_n - \sigma) v_n,
\end{equation}
and one can check that it is satisfied by \eqref{2Dvn}.  

Lastly, the conditions on $k$ imply that $\{ k_n \}$ are bounded, so there exists some constant $C$ such that 
\begin{equation}
|v_n| \leq \frac{C^{n-m}}{(n-m)!}|z|^{n-m}.
\end{equation}
Therefore the sum
\begin{equation}
v(x,t) = \sum_{n \in \Z} v_n(x) e^{int}
\end{equation}
converges and is bounded uniformly in $m$ and $v(x - x_0, t)$ a smooth solution to \eqref{2DAdjointRTE}, such that $v(x_0, t) = e^{imt}$.
\end{proof}

\section*{Acknowledgements}

GB was supported in part by the NSF grant DMS-1108608.
JCS was supported in part by the NSF grants DMS-1115574 and DMS-1108969.

\end{document}